\title{A behavioral interpretation of belief functions}
\date{\today}
\author{Timber Kerkvliet and Ronald Meester \\ \\ VU University Amsterdam}
\numberwithin{equation}{section}
\newtheorem{theorem}{Theorem}[section]
\theoremstyle{definition}
\newtheorem{definition}[theorem]{Definition}
\theoremstyle{remark}
\newtheorem{example}[theorem]{Example}
\newcommand{\B}{{\rm Bel}}
\newcommand{\Bu}{{\rm Buy}}
\newcommand{\Se}{{\rm Sell}}
\begin{document}
\maketitle
\begin{abstract} 
Shafer's belief functions were introduced in the seventies of the previous century as a mathematical tool in order to model epistemic probability. One of the reasons that they were not picked up by mainstream probability was the lack of a behavioral interpretation. In this paper we provide such a behavioral interpretation, and re-derive Shafer's belief functions via a betting interpretation reminiscent of the classical Dutch Book Theorem for probability distributions. We relate our betting interpretation of belief functions to the existing literature.
\end{abstract}

\section{Introduction}
\label{sec:intro}

Modern mainstream probability theory, be it aleatory or epistemic, is almost exclusively based on the axioms of Kolmogorov, and we start our exposition with quoting these axioms. We fix a finite set  $\Omega$ of outcomes.

\begin{definition}[Kolmogorov axioms]
A function $P: 2^{\Omega} \rightarrow [0,1]$ is a probability distribution if it satisfies:
\begin{enumerate}
\item [(P1)] $P(\Omega)=1$ and $P(\emptyset)=0$;
\item [(P2)] For every every $A, B \subset \Omega$ we have
\begin{equation}
P(A \cup B) = P(A) + P(B) - P(A \cap B).
\end{equation}
\end{enumerate}
\end{definition}

From an aleatory point of view, the axioms are often justified via a frequentistic interpretation of probabilities. In such a frequentistic interpretation, we take relative frequencies in repeated experiments as the motivation and justification of the axioms. 

From an epistemic point of view, the probability of an event $A$ is often interpreted as the degree of belief an agent has in a $A$. This degree of belief can be quantified as the price for which the agent is willing to both buy and sell a bet that pays out $1$ if $A$ turns out to be true. Using the Dutch Book argument this interpretation also leads to the Kolmogorov axiomatization, as is well known. 

However, degrees of belief cannot always be satisfactorily described with the classical axioms of Kolmogorov, something which has been recognized and confirmed by many researchers from such different disciplines as mathematics \cite{shafer76, shafer81, shafer08, cohen77, walley}, legal science \cite{cohen77}, and philosophy (see \cite{haenni, dietz10, FH, walley} and references therein). These authors have argued that the classical axioms of probability are too restrictive for at least two reasons:

\begin{enumerate}
\item It is impossible for an agent to distinguish between disbelief in $A$, by which we mean $P(A^c)$, and lack of belief, by which we mean $1-P(A)$. Especially when we interpret degree of belief in $A$ as the degree to which an agent has supporting evidence for $A$, lack of supporting evidence for $A$ is not necessarily supporting evidence for $A^c$.
\item It is impossible for an agent to suspend all judgment, that is, assigning $P(A)=P(A^c)=0$ is impossible. But certainly it is possible to have no supporting evidence at all for either $A$ or $A^c$. As an example, consider a situation in which a man claims to be the father of a certain child. A DNA test may or may not rule out the man as a potential father of the child, but in any classical probabilistic computation one has to start with a prior probability for the man to be the father. Often one takes a fifty-fifty prior, but clearly this does not correspond to our knowledge. Since we have no evidence for either fathership or non-fathership, it would be more reasonable to assign zero prior belief to both possibilities, something which is impossible under the Kolmogorov axioms. (Uniform prior probabilities are also problematic from another point of view: lumping states together or changing the scale does not preserve uniformity, as is well known, see e.g.\ \cite{shafer76}.)
\end{enumerate}

These and similar considerations motivated Glenn Shafer \cite{shafer76} to introduce belief functions, which were supposed to better capture the nature of epistemic probability. To see how belief functions differ from classical probability distributions, we note that it is well known that (P2) can be expanded, for any collection of events $A_1, \ldots, A_N$, into the well-known inclusion-exclusion formula
\begin{equation}
\label{eq:beliefadditivity}
P\left(\bigcup_{i=1}^N A_i \right) = \sum_{\substack{I \subseteq \{1,\ldots,N\} \\ I \not= \emptyset}} (-1)^{|I|+1}P \left( \bigcap_{i\in I} A_i \right).
\end{equation}

In the Shafer axiomatization of belief functions, (\ref{eq:beliefadditivity}) is replaced by a corresponding \emph{inequality}.
\begin{definition}[Shafer axioms (\cite{shafer76})]
\label{def:belieffunction}
A function $\B: \Omega \rightarrow [0,1]$ is a \emph{belief function} provided that
\begin{itemize}
\item[(B1)] $\B(\emptyset)=0$ and $\B(\Omega)=1$;
\item[(B2)] For all $A_1,A_2,\ldots, A_N \subset \Omega$, we have
\begin{equation}
\label{eq:condB2*}
\B\left(\bigcup_{i=1}^N A_i \right) \geq \sum_{\substack{I \subseteq \{1,\ldots,N\} \\ I \not= \emptyset}} (-1)^{|I|+1}\B \left( \bigcap_{i\in I} A_i \right).
\end{equation}
\end{itemize}
\end{definition}

One may at this point already loosely argue that the Shafer axioms are indeed suitable for describing degrees of belief, as follows. Taking $N=2$ in (\ref{eq:condB2*}) for simplicity gives $\B(A \cup B) \geq \B(A) + \B(B) - \B(A \cap B)$. A loose interpretation now is as follows. If we have supporting evidence for either $A$ or $B$, then of course we have also supporting evidence of $A \cup B$. However, supporting evidence for both $A$ and $B$ is then counted twice and must be subtracted on the right hand side. Now notice that it is possible that there is supporting evidence for $A \cup B$, but not for $A$ or $B$ individually, leading to the inequality in the formula. 

Shafer showed that belief functions have the following characterization.

\begin{definition}
A function $m: 2^{\Omega} \rightarrow [0,1]$ is called a {\em basic belief assignment} if $m(\emptyset)=0$ and
\begin{equation}
\label{eq:additivitym}
\sum_{C \subseteq \Omega} m(C)=1.
\end{equation}
\end{definition}

\begin{theorem}
\label{def:belief}
A function $\B: 2^\Omega \rightarrow [0,1]$ is a belief function if and only if there exists a basic belief assignment $m$ such that
\begin{equation}
\B(A) = \sum_{C \subseteq A} m(C).
\end{equation}
There is a one-to-one correspondence between belief functions and basic belief assignments, and $\B$ is a probability distribution if and only if $m$ concentrates on singletons.
\end{theorem}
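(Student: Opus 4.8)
The plan is to prove both directions of the equivalence, establish the bijection, and then characterize the probability case.

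First I would prove the easier direction: that any $\B$ arising from a basic belief assignment $m$ via $\B(A) = \sum_{C \subseteq A} m(C)$ is indeed a belief function. Here (B1) is immediate, since $\B(\emptyset) = m(\emptyset) = 0$ and $\B(\Omega) = \sum_{C \subseteq \Omega} m(C) = 1$ by \eqref{eq:additivitym}. The real content is verifying (B2). The natural strategy is to substitute the formula for $\B$ into both sides of \eqref{eq:condB2*} and compare the coefficient of each $m(C)$. On the left, $m(C)$ appears with coefficient $1$ precisely when $C \subseteq \bigcup_i A_i$. On the right, after expanding the inclusion-exclusion sum, the coefficient of $m(C)$ is a signed count over those index sets $I$ for which $C \subseteq \bigcap_{i \in I} A_i$, i.e.\ those $I$ contained in $J(C) := \{i : C \subseteq A_i\}$. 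Using the binomial identity $\sum_{\emptyset \neq I \subseteq J} (-1)^{|I|+1} = 1$ whenever $J \neq \emptyset$, this right-hand coefficient equals $1$ when $J(C) \neq \emptyset$ and $0$ when $J(C) = \emptyset$. Since $C \subseteq \bigcup_i A_i$ is implied by $J(C) \neq \emptyset$ but not conversely, the left coefficient dominates the right termwise, and as $m \geq 0$ the inequality (B2) follows.

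Next I would prove the converse: every belief function $\B$ comes from some $m$. The idea is to \emph{define} $m$ by Möbius inversion of the defining relation, namely $m(C) := \sum_{A \subseteq C} (-1)^{|C \setminus A|} \B(A)$, which is forced upon us if any such $m$ is to exist. Standard Möbius inversion on the Boolean lattice then guarantees $\B(A) = \sum_{C \subseteq A} m(C)$ identically, so the substitution relation holds automatically. What remains is to check that this $m$ is a genuine basic belief assignment: that $m(\emptyset) = 0$, that $\sum_C m(C) = 1$, and crucially that $m(C) \geq 0$ for all $C$. The first follows from $\B(\emptyset) = 0$, and the normalization $\sum_C m(C) = \B(\Omega) = 1$ follows from the inversion applied at $\Omega$ together with (B1). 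The nonnegativity $m(C) \geq 0$ is the heart of the matter and is exactly where axiom (B2) must be invoked: I would apply (B2) to a suitable family of sets — taking the $A_i$ to be the maximal proper subsets $C \setminus \{x\}$ of $C$ as $x$ ranges over $C$, so that $\bigcup_i A_i = C \setminus$ (nothing) $= C$ fails to cover only the full set and the inclusion-exclusion alternating sum on the right reconstructs precisely the Möbius expression $\B(C) - \sum_{A \subsetneq C} (-1)^{\dots} \B(A)$, which is $m(C)$. The main obstacle is setting up this family so that the inclusion-exclusion sum in (B2) matches the Möbius formula term for term; this bookkeeping, rather than any deep idea, is the delicate part.

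Finally, the one-to-one correspondence follows because the two maps $m \mapsto \B$ and $\B \mapsto m$ just constructed are mutually inverse by Möbius inversion, so the correspondence is a genuine bijection. For the last claim, I would argue both implications. If $m$ is concentrated on singletons, writing $p(x) := m(\{x\})$, then $\B(A) = \sum_{x \in A} p(x)$ is additive over disjoint unions, so \eqref{eq:condB2*} holds with equality and in particular (P2) is satisfied, making $\B$ a probability distribution. Conversely, if $\B$ is a probability distribution, then \eqref{eq:beliefadditivity} holds with equality for all families, so all the inequalities in the nonnegativity argument above become equalities; applying the Möbius formula to any $C$ with $|C| \geq 2$ then forces $m(C) = 0$, which is most transparently seen by noting that for a probability distribution the inclusion-exclusion identity makes the alternating sum defining $m(C)$ collapse to zero whenever $C$ is not a singleton. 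Hence $m$ concentrates on singletons, completing the proof.
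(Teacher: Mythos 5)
Your proof is correct, but there is nothing in the paper to compare it against: the paper states this theorem as a known result of Shafer and defers the proof entirely to \cite{shafer76}. What you have written is essentially the classical argument from that reference. For the direction ``$m$ gives a belief function,'' your termwise comparison of the coefficient of $m(C)$ on both sides of (B2), via the identity $\sum_{\emptyset \neq I \subseteq J} (-1)^{|I|+1} = 1$ for $J \neq \emptyset$, is exactly right, since $C \subseteq A_i$ for some $i$ implies $C \subseteq \bigcup_i A_i$ but not conversely. For the converse, defining $m$ by M\"obius inversion and extracting nonnegativity from (B2) applied to the maximal proper subsets $A_i = C \setminus \{x_i\}$ is the standard route, and the bookkeeping does work out: the map $I \mapsto C \setminus \{x_i : i \in I\}$ is a bijection from nonempty index sets to proper subsets of $C$, and the signs match the M\"obius formula. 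One small caveat you should make explicit: this family only satisfies $\bigcup_i A_i = C$ when $|C| \geq 2$ (for a singleton $C$ the union is empty), so the cases $|C| \leq 1$ need the separate but trivial observations $m(\emptyset) = \B(\emptyset) = 0$ and $m(\{x\}) = \B(\{x\}) \geq 0$. Your treatment of the final claim is also sound: concentration on singletons gives additivity directly, and conversely, applying the equality version \eqref{eq:beliefadditivity} to the same family of maximal proper subsets forces $m(C) = 0$ whenever $|C| \geq 2$.
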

In many situations, there is a natural candidate for the basic belief assignment $m$, especially when the latter is obtained through a classical probabilistic experiment. 

Although belief functions are used in certain applied settings (see \cite{cuzzolin} for a recent text), researchers in mainstream mathematics have essentially stayed away from it. There are, roughly speaking, two reasons for this. First, there were many critics (see the references in \cite{shafer81}) for which a theory about uncertainty without a behavioral (betting) interpretation was unacceptable. 

A second major point of concern is formulated in \cite{schum94} and \cite{fs86}. In both references, the main reason to reject Shafer's belief functions is that the calculus of these belief functions, as put forward in the so called Dempster rule of combination to combine two belief functions into a new one, would not be well founded or motivated, and would lead to unacceptable results. 

The main goal of the current paper is to show how belief functions arise form a natural betting interpretation, thereby taking away the first major point of concern. We already mentioned above that classically, the degree of belief an agent has in an event $A$, is interpreted as the price for which he or she is willing to both buy and sell a bet that pays out $1$ if $A$ turns out to be true. We argue, however, that a degree of belief an agent has in an event $A$, should be interpreted as the maximum price for which he or she is willing to \emph{buy} (not necessarily sell) a bet that pays out $1$ if $A$ turns out to be true. In our approach, the difference between buying and selling is interpreted as the difference between disbelief and lack of believe that we mentioned above. The distinction between buying and selling prices also appears in the theory of Peter Walley \cite{walley}, and below we comment on the relation between his general approach and our theory.

We do not further comment on Dempster's rule of combination for the simple reason that we have no need for this rule, and we develop the theory without it.

Other characterizations of belief functions exist in the literature, for instance in the work of Smets \cite{smets}. We think our approach is more direct than his, and as a result our characterization is much more concise (he has 10 requirements). Moreover, unlike Smets, we work in a betting interpretation, so that our result can be seen as an analogue of the Dutch Book Theorem.

The paper is built up as follows. In Section \ref{sec:bet} we develop our betting theory, and in Section \ref{sec:main} we state and prove the betting interpretation of belief functions. This section contains our main result Theorem \ref{belangrijk}. 

\section{Betting functions and degrees of belief}
\label{sec:bet}
We approach belief behaviorally, working under the assumption that the degree to which someone believes an event, can be measured by his or her willingness to accept bets. To go towards a definition that we can work with mathematically, we introduce betting functions. 

\begin{definition}
A \emph{bet} (or gamble) on $\Omega$ is a function $X: \Omega \rightarrow \mathbb{R}$. We write $\mathcal{X} = \mathbb{R}^X$ for the collection of all bets on $\Omega$. A \emph{betting function} is a function $R: \mathcal{X} \rightarrow \{0,1\}$ such that for each $X \in \mathcal{X}$, there is an $\alpha_0 \in \mathbb{R}$ such that $R(X+\alpha)=0$ for $\alpha<\alpha_0$ and $R(X+\alpha)=1$ for $\alpha \geq \alpha_0$. 
\end{definition}

We interpret $R$ as a function that indicates, for each bet $X \in \mathcal{X}$, whether or not an agent is willing to accept $X$, where we interpret $R(X)=1$ as `willing to accept the bet' and $R(X)=0$ `not willing to accept the bet'. The definition of a betting function justifies the following definition.

\begin{definition}
Let $R$ be a betting function. The \emph{buy function} $\mathrm{Buy}_R: \mathcal{X} \rightarrow \mathbb{R}$ is given by
\begin{equation}
\mathrm{Buy}_R(X) := \max \{ \alpha \in \mathbb{R} \;:\; R(X - \alpha)=1 \}. 
\end{equation}
This is the maximum price an agent is willing to pay for a bet which pays out $X$. The \emph{sell function}  $\mathrm{Sell}_R: \mathcal{X} \rightarrow \mathbb{R}$ is given by
\begin{equation}
\mathrm{Sell}_R(X) := \min \{ \alpha \in \mathbb{R} \;:\; R(\alpha - X)=1 \}. 
\end{equation}
This is the minimum price an agent is willing to sell the bet which pays out $X$.
\end{definition}

The buy and sell function have the following relation:
\begin{equation}
\begin{aligned}
\Bu_R(X) & = \max\{ \alpha \in \mathbb{R} \;:\; R(X-\alpha)=1 \} \\
& = \max\{  -\alpha \in \mathbb{R} \;:\; R(X+\alpha)=1 \} \\
& = - \min\{ \alpha \in \mathbb{R} \;:\; R(\alpha-(-X))=1 \} \\
& = - \Se_R(-X).
\end{aligned}
\end{equation}
This shows how buy and sell functions are dual in the sense that $\Bu_R$ is completely determined by $\Se_R$ and vice versa. This relation between $\Bu$ and $\Se$ is precisely the relation between lower and upper previsions in \cite{walley}, but in our case the relation follows from the underlying concept of a betting function. Note that we can recover $R$ from $\Bu_R$, since
\begin{equation}
R(X) = \left\{
	\begin{array}{ll}
		1  & \mbox{if } \Bu_R(X) \geq 0, \\
		0 & \mbox{if } \Bu_R(X) < 0.
	\end{array}
\right.
\end{equation}

\begin{definition}
\label{coherent}
Let  $R: \mathcal{X} \rightarrow \{0,1\}$ be a betting function. Then $R$ is said to be {\em coherent} if
\begin{itemize} 
\item[(R1)] If $X>0$, then $R(X)=1$;
\item[(R2)] $R(\lambda X) = R(X)$ for every $\lambda>0$;
\item[(R3)] If $R(X)=R(Y)=1$, then $R(X+Y)=1$.
\end{itemize}
\end{definition}

These conditions do not necessarily capture everything a reasonable agent should adhere to, and for rational behavior more is needed as we will see. However, the conditions formulate the very basics. Condition (R1) says that agents should be willing to accept bets with guaranteed positive results. Condition (R2) says that willingness to accept bets should only depend on the relative sizes of the  results, but not on their absolute sizes. Condition (R3) says that if an agent is willing to accept two bet, (s)he should be willing to accept bets simultaneously. We want to point out that both condition (R2) and (R3) have a debatable consequence: if an agent is willing to accept a bet in which (s)he wins 1 euro if $A$ is true and loses 1 euro if $A$ is false, then (s)he) should be willing to accept a bet in which (s)he wins 1 million euro if $A$ is true and loses 1 million euro if $A$ is false. In the real world, one could think of all kinds of reasons why an agent would not want to accept the second bet, even if (s)he is willing to accept the first one. By imposing coherence, we thus consider highly idealized agents.

The following result says that $\Bu_R$ is coherent as a lower prevision in the sense of Walley \cite{walley} if and only if $R$ is coherent in the sense of Definition \ref{coherent}.

\begin{theorem}
\label{thm:coherence1}
Let $R: \mathcal{X} \rightarrow \{0,1\}$ be a betting function. Then $R$ is coherent if and only if
\begin{itemize} 
\item $\Bu_R(X) \geq \min X$;
\item $\Bu_R(\lambda X)= \lambda \Bu_R(X)$ for every $\lambda>0$;
\item $\Bu_R(X+Y) \geq \Bu_R(X) + \Bu_R(Y)$.
\end{itemize}
\end{theorem}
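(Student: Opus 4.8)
The plan is to exploit two elementary bridging facts that connect $R$ and $\Bu_R$ and that hold for any betting function, coherent or not. First, the threshold structure in the definition of a betting function guarantees that the set $\{\alpha : R(X-\alpha)=1\}$ has the form $(-\infty,\Bu_R(X)]$, so the maximum defining $\Bu_R(X)$ is actually attained; in particular $R(X-\Bu_R(X))=1$. Second, as already noted in the excerpt, $R(X)=1$ if and only if $\Bu_R(X)\geq 0$. With these two facts in hand, each coherence axiom translates into the corresponding statement about $\Bu_R$, and I would split the argument into the two implications, treating the three conditions separately within each.

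For the forward direction I assume $R$ coherent. Positive homogeneity is the cleanest: writing $\lambda X - \alpha = \lambda(X-\alpha/\lambda)$ and applying (R2) shows $R(\lambda X-\alpha)=R(X-\alpha/\lambda)$, so after the substitution $\beta = \alpha/\lambda$ the maximum defining $\Bu_R(\lambda X)$ equals $\lambda$ times the one defining $\Bu_R(X)$, using $\lambda>0$. Superadditivity follows from attainment of the maximum: with $a=\Bu_R(X)$ and $b=\Bu_R(Y)$ we have $R(X-a)=R(Y-b)=1$, so (R3) gives $R((X+Y)-(a+b))=1$, whence $a+b$ lies in the defining set for $\Bu_R(X+Y)$ and $\Bu_R(X+Y)\geq a+b$. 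The bound $\Bu_R(X)\geq\min X$ is the one requiring care: since (R1) only applies to strictly positive bets while $X-\min X$ merely satisfies $X-\min X\geq 0$, I cannot feed $X-\min X$ directly into (R1). Instead, for every $\varepsilon>0$ the bet $X-\min X+\varepsilon$ is strictly positive, so (R1) yields $R\bigl(X-(\min X-\varepsilon)\bigr)=1$ and hence $\Bu_R(X)\geq \min X-\varepsilon$; letting $\varepsilon\downarrow 0$ gives the claim.

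For the reverse direction I assume the three properties of $\Bu_R$ and read everything through the equivalence $R(X)=1\iff \Bu_R(X)\geq 0$. If $X>0$ then $\min X>0$ (the minimum is attained since $\Omega$ is finite), so $\Bu_R(X)\geq\min X>0$ forces $R(X)=1$, giving (R1). For (R2), $R(\lambda X)=1\iff \lambda\Bu_R(X)\geq 0\iff \Bu_R(X)\geq 0\iff R(X)=1$, using positive homogeneity and $\lambda>0$. For (R3), if $R(X)=R(Y)=1$ then $\Bu_R(X),\Bu_R(Y)\geq 0$, so superadditivity gives $\Bu_R(X+Y)\geq \Bu_R(X)+\Bu_R(Y)\geq 0$ and thus $R(X+Y)=1$. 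I expect the only genuine obstacle to be the asymmetry in the first condition, namely matching the strict positivity of (R1) with the non-strict bound $\Bu_R(X)\geq\min X$; in the forward direction this is resolved by the $\varepsilon$-perturbation above, and in the reverse direction by the observation that strict positivity of $X$ on a finite $\Omega$ already forces $\min X>0$.
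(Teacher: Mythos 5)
Your proof is correct and follows essentially the same route as the paper's: the same $\varepsilon$-perturbation for $\Bu_R(X)\geq\min X$, the same substitution for positive homogeneity, the same use of (R3) for superadditivity (your use of attainment of the maximum is a minor streamlining of the paper's supremum argument), and the same reverse direction via the equivalence $R(X)=1\iff\Bu_R(X)\geq 0$.
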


\begin{proof}
Suppose $R$ is coherent. By (R1), we have that $R(X - \min X + \epsilon)=1$ for every $\epsilon>0$. Hence
\begin{equation}
\Bu_R(X) = \max \{ \alpha \in \mathbb{R} \;:\; R(X-\alpha)=1 \} \geq \min X - \epsilon
\end{equation}
for every $\epsilon>0$, so $\Bu_R(X) \geq \min X$. By (R2), we have for every $\lambda>0$ that
\begin{equation}
\begin{aligned}
\Bu_R(\lambda X) & = \max \{ \alpha \in \mathbb{R} \;:\; R(\lambda X-\alpha)=1 \} \\
& = \max \{ \alpha \in \mathbb{R} \;:\; R(X- \frac{\alpha}{\lambda})=1 \} \\
& = \max \{ \lambda\alpha \in \mathbb{R} \;:\; R(X- \alpha)=1 \} \\
& = \lambda \max \{ \alpha \in \mathbb{R} \;:\; R(X- \alpha)=1 \} \\
& = \lambda \Bu_R(X).
\end{aligned}
\end{equation}
Suppose that, for some $\alpha,\beta \in \mathbb{R}$, we have $R(X-\alpha)=1$ and $R(Y-\beta)=1$. Then, by (R3), we have $R(X+Y-\alpha-\beta)=1$ and thus
\begin{equation}
\Bu_R(X+Y) = \max \{ \gamma \in \mathbb{R} \;:\; R(X+Y-\gamma)=1 \} \geq \alpha + \beta.
\end{equation}
It follows that
\begin{equation}
\begin{aligned}
\Bu_R(X+Y) & \geq  \max\{ \alpha \;:\; R(X-\alpha)=1 \} +  \max\{ \beta \;:\; R(X-\beta)=1 \} \\
& = \Bu_R(X) + \Bu_R(Y).
\end{aligned}
\end{equation}

For the converse, suppose that $\Bu_R$ has the listed properties. We set $\psi: \mathbb{R} \rightarrow \{0,1\}$ by
\begin{equation}
\psi(x) = \left\{
	\begin{array}{ll}
		1  & \mbox{if } x \geq 0, \\
		0 & \mbox{if } x < 0,
	\end{array}
\right.
\end{equation}
and we note as before that
\begin{equation}
R(X) = \psi(\Bu_R(X)).
\end{equation}

If $X>0$, then $\Bu_R(X) \geq 0$ by the first property. Hence $R(X)= \psi(\Bu_R(X))=1$. So (R1) holds.

The second property tells us that, for every $\lambda>0$, we have
\begin{equation}
R(\lambda X) = \psi(\Bu_R( \lambda X)) = \psi( \lambda \Bu_R(X) ) = \psi( \Bu_R(X) ) = R(X),
\end{equation}
so (R2) holds.

By the third property, we have
\begin{equation}
\begin{aligned}
R(X+Y) & = \psi(\Bu_R(X+Y)) \\
& \geq \psi(\Bu_R(X) + \Bu_R(Y)) \\
& \geq \min \{ \psi(\Bu_R(X)), \psi(\Bu_R(Y)) \} \\
& = \min \{ R(X), R(Y) \}.
\end{aligned}
\end{equation}
Hence if $R(X)=R(Y)=1$, it follows that $R(X+Y)=1$, so (R3) holds.
\end{proof}

The second and third property of Theorem \ref{thm:coherence1} tell us that $\Bu_R$ is a super-linear functional if $R$ is coherent. Coherence of $R$ can of course also be captured in terms of $\Se_R$:

\begin{theorem}
\label{thm:coherence2}
Let $R: \mathcal{X} \rightarrow \{0,1\}$ be a betting function. Then $R$ is coherent if and only if
\begin{itemize} 
\item $\Se_R(X) \leq \max X$;
\item $\Se_R(\lambda X)= \lambda \Se_R(X)$ for every $\lambda>0$;
\item $\Se_R(X+Y) \leq \Se_R(X) + \Se_R(Y)$.
\end{itemize}
\end{theorem}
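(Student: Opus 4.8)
The plan is to avoid reproving anything from scratch and instead exploit the duality $\Bu_R(X) = -\Se_R(-X)$ established just before Definition \ref{coherent}, together with the already-proven characterization in Theorem \ref{thm:coherence1}. Rewriting the duality as $\Se_R(X) = -\Bu_R(-X)$, I would translate each of the three $\Bu_R$-conditions of Theorem \ref{thm:coherence1} into the corresponding $\Se_R$-condition. Since $R$ is coherent if and only if the $\Bu_R$-conditions hold, showing that the $\Bu_R$-conditions are equivalent to the $\Se_R$-conditions immediately yields the result.

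The first step handles the inequality $\Bu_R(X) \geq \min X$. Substituting $\Bu_R(X) = -\Se_R(-X)$ turns this into $\Se_R(-X) \leq -\min X$, and since $-\min X = \max(-X)$, setting $Y = -X$ gives $\Se_R(Y) \leq \max Y$. The key observation making this a genuine equivalence is that as $X$ ranges over all of $\mathcal{X}$, so does $Y = -X$, because negation is a bijection of $\mathcal{X}$; hence the universally quantified statements match exactly. The second step treats positive homogeneity: from $\Bu_R(\lambda X) = \lambda \Bu_R(X)$ and the duality one gets $-\Se_R(-\lambda X) = -\lambda \Se_R(-X)$, which after the substitution $Y = -X$ reads $\Se_R(\lambda Y) = \lambda \Se_R(Y)$ for every $\lambda > 0$. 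The third step treats the superadditivity of $\Bu_R$: the inequality $\Bu_R(X+Y) \geq \Bu_R(X) + \Bu_R(Y)$ becomes, via duality, $-\Se_R(-X-Y) \geq -\Se_R(-X) - \Se_R(-Y)$, and writing $U = -X$, $V = -Y$ turns this into the subadditivity statement $\Se_R(U+V) \leq \Se_R(U) + \Se_R(V)$.

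Because each of these three translations is an equivalence (the substitution is reversible and the quantifiers are preserved), the conjunction of the three $\Se_R$-properties is equivalent to the conjunction of the three $\Bu_R$-properties, which by Theorem \ref{thm:coherence1} is equivalent to coherence of $R$. This completes the argument.

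I do not expect a substantive obstacle here: the content of the theorem is entirely carried by Theorem \ref{thm:coherence1} and the duality identity, and the proof is a bookkeeping exercise in applying $\Se_R(X) = -\Bu_R(-X)$. The only point requiring a little care is to state explicitly that negation is a bijection on $\mathcal{X}$, so that a property holding for all $X$ transfers to a property holding for all $-X$ without loss; this is what guarantees each implication runs in both directions rather than merely one. One could alternatively give a direct proof mirroring that of Theorem \ref{thm:coherence1} using conditions (R1)--(R3), but that would duplicate work, so the dual approach is clearly preferable.
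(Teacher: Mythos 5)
Your proposal is correct and is exactly the paper's argument: the paper's proof simply states that the result ``follows directly from Theorem \ref{thm:coherence1} and the relation between $\Bu_R$ and $\Se_R$,'' which is precisely the duality substitution $\Se_R(X) = -\Bu_R(-X)$ that you carry out in detail. Your added care about negation being a bijection on $\mathcal{X}$, so that the universally quantified statements transfer in both directions, is a correct and worthwhile elaboration of what the paper leaves implicit.
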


\begin{proof}
This follows directly from Theorem \ref{thm:coherence1} and the relation between $\Bu_R$ and $\Se_R$.
\end{proof}

We want to measure the degree to which an agent believes $A \subseteq \Omega$ by the willingness to accept a bet with a desirable result if $A$ is true and an undesirable result if $A$ is false. The actions that have a desirable result if $A$ is true and an undesirable result if $A$ is false, are buying the bet $1_A$ and selling the bet $1_{A^c}$. It is willingness to buy $1_A$ for a high price and willingness to sell $1_{A^c}$ for a low price, that shows a high degree of belief in $A$. This leads to our definition of degree of belief.

\begin{definition} [Degree of belief]
Let $R$ be the coherent betting function of an agent. We define the degree to which this agent believes an event $A \subseteq \Omega$ as $\Bu_R(1_A)=1-\Se_R(1_{A^c})$.
\end{definition}

\section{Adding B-consistency}
\label{sec:main}

In this section, we introduce an additional condition for betting functions and show that this constraint precisely leads to buy functions which are belief functions when restricted to bets of the form $1_A$, with $A \subset \Omega$. Before we do this, however, we will briefly discuss how our setup relates to the Dutch Book argument for probability distributions.

The Dutch Book argument is centered around the principle that  betting behavior of agents should not lead to sure loss. The following theorem tells us that not having a sure loss is already implied by coherence. This theorem is well known within Walley's theory, but we give our version of the proof as a service to the reader.

\begin{theorem}
\label{thm:coherence2}
If $R$ is a coherent betting function, then 
\begin{equation}
\label{eq:nosureloss}
\max_{\omega \in \Omega} \left(\sum_{i=1}^N \left( X_i - \Bu_R(X_i)\right) + \sum_{j=1}^M \left( \Se_R(Y_j) - Y_j\right) \right) \geq 0
\end{equation}
for all $X_1,\ldots,X_N \in \mathcal{X}$ and $Y_1,\ldots,Y_M \in \mathcal{X}$.
\end{theorem}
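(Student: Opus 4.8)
The plan is to read the left-hand side of \eqref{eq:nosureloss} as the net payoff of a single combined transaction and to bound it below using only the sell-function characterization of coherence. Since $\Bu_R(X_i)$ and $\Se_R(Y_j)$ are constants, I would first rewrite the bracketed bet as $W+c$, where $W := \sum_{i=1}^N X_i - \sum_{j=1}^M Y_j \in \mathcal{X}$ and $c := \sum_{j=1}^M \Se_R(Y_j) - \sum_{i=1}^N \Bu_R(X_i)$ is a constant. Then $\max_{\omega}(W+c) = \max_{\omega} W + c$, so the claim reduces to
\[
\max_{\omega \in \Omega} W(\omega) \;\geq\; \sum_{i=1}^N \Bu_R(X_i) - \sum_{j=1}^M \Se_R(Y_j).
\]

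To bridge the pointwise maximum of $W$ and the agent's prices, I would invoke the coherence property $\Se_R(W) \leq \max_{\omega} W(\omega)$, so that it suffices to prove $\Se_R(W) \geq \sum_i \Bu_R(X_i) - \sum_j \Se_R(Y_j)$. The key step is to put buy and sell prices on a common footing: using the duality $\Bu_R(X_i) = -\Se_R(-X_i)$, the right-hand side becomes $-\sum_i \Se_R(-X_i) - \sum_j \Se_R(Y_j)$, so the target is equivalent to
\[
\Se_R(W) + \sum_{i=1}^N \Se_R(-X_i) + \sum_{j=1}^M \Se_R(Y_j) \;\geq\; 0.
\]
Now I would apply subadditivity of $\Se_R$, iterated over the finitely many summands, to bound the left-hand side below by $\Se_R\big(W - \sum_i X_i + \sum_j Y_j\big)$, and observe that the argument telescopes to the zero bet precisely because $W = \sum_i X_i - \sum_j Y_j$. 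It then remains to note $\Se_R(0)=0$, which follows from the same two ingredients: subadditivity gives $\Se_R(0) \leq 2\Se_R(0)$, hence $\Se_R(0) \geq 0$, while $\Se_R(0) \leq \max 0 = 0$ supplies the reverse inequality.

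The only genuine subtlety, and what I expect to be the main obstacle, is the bookkeeping that merges a mixture of purchases and sales into a single application of sub-/super-additivity. The natural temptation is to work through $\Bu_R$, reading each sale of $Y_j$ as a purchase of $-Y_j$ and collapsing the buy prices by superadditivity; this route, however, forces the auxiliary bound $\Bu_R(S) \leq \max_{\omega} S(\omega)$ for $S := \sum_i X_i + \sum_j(-Y_j)$, which is \emph{not} among the listed coherence properties and must be derived separately. Routing everything through $\Se_R$ via $\Bu_R = -\Se_R(-\,\cdot\,)$ avoids this, since $\Se_R(\,\cdot\,) \leq \max(\,\cdot\,)$ is available directly. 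Once the signs are arranged so that the combined bet cancels to $0$, the remainder is a routine telescoping together with the identity $\Se_R(0)=0$.
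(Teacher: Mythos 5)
Your proof is correct, and it genuinely differs from the paper's. The paper follows exactly the route you call the ``natural temptation'': it writes $\sum_i \Bu_R(X_i) - \sum_j \Se_R(Y_j) = \sum_i \Bu_R(X_i) + \sum_j \Bu_R(-Y_j) \leq \Bu_R\bigl(\sum_i X_i - \sum_j Y_j\bigr)$ by superadditivity of $\Bu_R$, and then needs the auxiliary bound $\Bu_R(Z) \leq \max Z$, which it proves separately by contradiction: if $\Bu_R(Z) > \max Z$, then (R1), (R2) and (R3) together force $R(Y)=1$ for every bet $Y$, contradicting the finite threshold built into the definition of a betting function. Your route replaces that lemma with the telescoping bound $\Se_R(W) + \sum_i \Se_R(-X_i) + \sum_j \Se_R(Y_j) \geq \Se_R(0) = 0$ together with $\Se_R(W) \leq \max W$, both of which are available from the sell-function characterization of coherence that the paper states immediately after Theorem \ref{thm:coherence1}; in effect you establish $\Bu_R(W) \leq \Se_R(W) \leq \max W$ rather than $\Bu_R(W) \leq \max W$ directly. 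Your version is more economical in its ingredients: it dispenses with the contradiction argument and never uses positive homogeneity (R2), relying only on (R1) and (R3) (through the sell-side properties) and on the finiteness of $\Se_R(0)$ --- which the definition of a betting function guarantees, and which you do need in order to pass from $\Se_R(0) \leq 2\Se_R(0)$ to $\Se_R(0) \geq 0$. What the paper's choice buys in exchange is the standalone fact $\Bu_R \leq \max$, which is of independent interest, and a two-line finish once that lemma is in place.
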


\begin{proof}
We first show that $\Bu_R(X) \leq \max X$ for each $X \in \mathcal{X}$. Suppose that $\Bu_R(X) > \max X$. This means there is an $\epsilon>0$ such that
\begin{equation}
R(X- \max X - \epsilon) = 1.
\end{equation}
Note that for every $Y \in \mathcal{X}$, there is a $\lambda>0$ such that $\lambda(X-\max  X -\epsilon) < Y$. Since $R(\lambda(X- \max X - \epsilon)) = 1$ by (R2) and $R(Y-\lambda(X-\max  X -\epsilon))=1$ by (R1), it follows with (R3) that $R(Y)=1$. Hence $R(Y)=1$ for all $Y \in \mathcal{X}$, but then there is no maximum $\alpha \in \mathbb{R}$ such that $R(Y-\alpha)=1$. This is a contradiction, and it follows that $\Bu_R(X) \leq \max X$.

Now let $X_1,\ldots,X_N \in \mathcal{X}$ and $Y_1,\ldots,Y_M \in \mathcal{X}$. By using Theorem \ref{thm:coherence1} and the property we just proved, we find
\begin{equation}
\begin{aligned}
\sum_{i=1}^N \Bu_R(X_i) - \sum_{j=1}^M \Se_R(Y_j) & = \sum_{i=1}^N \Bu_R(X_i) + \sum_{j=1}^M \Bu_R(-Y_j) \\
& \leq \Bu_R \left( \sum_{i=1}^N X_i + \sum_{j=1}^M -Y_j \right) \\
& \leq \max \left( \sum_{i=1}^N X_i + \sum_{j=1}^M -Y_j \right).
\end{aligned}
\end{equation}
\end{proof}

Theorem \ref{thm:coherence2} tells us that coherence is stronger than having no sure loss. At the same time, even coherence does not imply that $A \mapsto \Bu_R(1_A)$ is a probability distribution: it is clear that the collection of maps $A \mapsto \Bu_R(1_A)$ for coherent $R$, is much richer than only probability distributions. This tells us that the property that $\Bu_R(1_A) = \Se_R(1_A)$ for every $A \subseteq \Omega$, which is usually implicitly assumed when the Dutch Book argument is laid out, is crucial for the restriction to probability distributions. The next theorem confirms this.

\begin{theorem}
$P$ is a probability distribution if and only if there exists a coherent betting function $R$ such that $P(A)=\Bu_R(1_A)=\Se_R(1_A)$.
\end{theorem}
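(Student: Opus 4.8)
The plan is to prove the two implications separately, disposing of the constructive ``if'' direction first.

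Suppose $P$ is a probability distribution. Since $\Omega$ is finite, $P$ is determined by its values on singletons through additivity, so I would introduce the expectation $\E_P(X) := \sum_{\omega \in \Omega} X(\omega) P(\{\omega\})$ for $X \in \mathcal{X}$ and define $R(X) := \psi(\E_P(X))$, where $\psi$ is the threshold function from the proof of Theorem \ref{thm:coherence1}; that is, $R(X)=1$ precisely when $\E_P(X) \geq 0$. Because $\E_P(X+\alpha)=\E_P(X)+\alpha$ is strictly increasing in $\alpha$, this $R$ has the required threshold property and is a betting function. Its coherence is immediate from linearity and positivity of $\E_P$: condition (R1) holds because $X>0$ forces $\E_P(X)\geq 0$, (R2) follows from $\E_P(\lambda X)=\lambda\E_P(X)$, and (R3) from additivity of $\E_P$. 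A direct computation then yields $\Bu_R(1_A)=\max\{\alpha : \E_P(1_A)-\alpha \geq 0\}=\E_P(1_A)=P(A)$ and, symmetrically, $\Se_R(1_A)=\min\{\alpha : \alpha-\E_P(1_A)\geq 0\}=P(A)$, which is what we want.

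For the converse, suppose a coherent $R$ exists with $P(A)=\Bu_R(1_A)=\Se_R(1_A)$ for every $A \subseteq \Omega$. That $P$ takes values in $[0,1]$ is automatic, since $0=\min 1_A \leq \Bu_R(1_A)$ and $\Se_R(1_A)\leq \max 1_A = 1$ by Theorems \ref{thm:coherence1} and \ref{thm:coherence2}. Axiom (P1) drops out the same way: $P(\emptyset)=\Bu_R(0)=0$ by positive homogeneity, while $1=\min 1 \leq \Bu_R(1)=P(\Omega)=\Se_R(1)\leq \max 1 = 1$. The substance is (P2), and here I would exploit the pointwise identity $1_A+1_B=1_{A\cup B}+1_{A\cap B}$ together with three ingredients: super-additivity of $\Bu_R$, sub-additivity of $\Se_R$ (Theorems \ref{thm:coherence1} and \ref{thm:coherence2}), and the comparison $\Bu_R(Z)\leq\Se_R(Z)$. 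The last of these I would establish first, from $0=\Se_R(0)\leq\Se_R(Z)+\Se_R(-Z)$ and the duality $\Bu_R(Z)=-\Se_R(-Z)$.

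Applying these ingredients to the single bet $1_A+1_B=1_{A\cup B}+1_{A\cap B}$ pins it between the two sides of (P2). Reading the identity one way, super-additivity gives $\Bu_R(1_{A\cup B}+1_{A\cap B}) \geq P(A\cup B)+P(A\cap B)$, while sub-additivity gives $\Se_R(1_A+1_B)\leq P(A)+P(B)$; chaining these through $\Bu_R\leq\Se_R$ produces $P(A\cup B)+P(A\cap B)\leq P(A)+P(B)$. Reading the same identity the other way produces the reverse inequality, and together they give equality, which is exactly (P2). I expect the only real difficulty to be bookkeeping rather than conceptual: one must keep straight which functional is applied to which grouping of the indicators so that the inequality chain actually closes. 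Once the auxiliary comparison $\Bu_R\leq\Se_R$ is in hand, the argument reduces to the elementary fact that a super-additive and a sub-additive functional that agree on the relevant bets must be additive there.
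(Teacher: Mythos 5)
Your proof is correct, and its overall strategy matches the paper's: the constructive direction uses exactly the same expectation functional $\Bu_R(X)=\sum_{\omega}P(\{\omega\})X(\omega)$ (you merely verify the threshold property and coherence explicitly, where the paper says ``clearly''), and the verification direction rests on the same sandwich between the super-additive $\Bu_R$ and the sub-additive $\Se_R$. The genuine difference is in how the sandwich is set up. The paper takes $A,B$ \emph{disjoint}, so that $1_A+1_B=1_{A\cup B}$ is itself an indicator; the hypothesis $\Bu_R(1_{A\cup B})=\Se_R(1_{A\cup B})=P(A\cup B)$ then applies directly to the combined bet, no auxiliary lemma is needed, and the paper concludes additivity (leaving the routine passage from disjoint additivity to the general identity (P2) implicit). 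You instead treat arbitrary $A,B$ via the modular identity $1_A+1_B=1_{A\cup B}+1_{A\cap B}$, whose common value $Z$ is not an indicator, so the hypothesis cannot be applied to $Z$ itself; this is why you need the extra comparison $\Bu_R(Z)\leq\Se_R(Z)$, which you correctly extract from sub-additivity of $\Se_R$ and the duality $\Bu_R(Z)=-\Se_R(-Z)$, to chain the two groupings together. What your route buys is a direct proof of (P2) in its stated generality for arbitrary $A,B$, plus a reusable lemma ($\Bu_R\leq\Se_R$ for any coherent betting function) that the paper never isolates; what the paper's route buys is brevity, since on disjoint sets the hypothesis does the bridging for free. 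One cosmetic slip: $\min 1_A$ equals $1$, not $0$, when $A=\Omega$ (and $\max 1_\emptyset=0$, not $1$), but what you actually need, namely $\Bu_R(1_A)\geq\min 1_A\geq 0$ and $\Se_R(1_A)\leq\max 1_A\leq 1$, holds in all cases.
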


\begin{proof}
Suppose there is a coherent $R$ such that $P(A)=\Bu_R(1_A) = \Se_R(1_A)$. We have $P(\Omega)=\Bu_R(1_\Omega)=1$ and $P(A)=\Bu_R(1_A) \geq 0$ by coherence. Then for disjoint $A,B \subseteq \Omega$ we find
\begin{equation}
P(A \cup B) = \Bu_R(1_A + 1_B) \geq \Bu_R(1_A) + \Bu_R(1_B) = P(A)+P(B)
\end{equation}
and
\begin{equation}
P(A \cup B) = \Se_R(1_A + 1_B) \leq \Se_R(1_A) + \Bu_R(1_B) = P(A)+ P(B).
\end{equation}
Hence $P$ is additive.

For the converse, suppose that $P$ is a probability distribution. Then define
\begin{equation}
\Bu_R(X) = \sum_{\omega \in \Omega} P(\{ \omega \}) X(\omega).
\end{equation}
Clearly, $R$ is coherent and we have $\Bu_R(X)=\Se_R(X)$.
\end{proof}

As we already mentioned, the constraint that $\Bu_R(1_A)=\Se_R(1_A)$ for every $A \subseteq \Omega$, is precisely one we do not want to impose. This property, however, is not easily weakened in a reasonable way. Therefore, we will work towards another characterization of probability distributions (Theorem \ref{hierboven}), from which we can derive our constraint. We start with the definition of a {\em belief valuation}.

\begin{definition}
\label{def:belval}
A function ${\cal B}: 2^{\Omega} \to \{0,1\}$ is called a \emph{belief valuation} provided that
\begin{itemize}
\item ${\cal B}(A^c)=0$ if ${\cal B}(A)=1$;
\item If ${\cal B}(A)=1$ and $A \subseteq B$, then ${\cal B}(B)=1$;
\item If ${\cal B}(A)= 1 $ and ${\cal B}(B)=1$, then ${\cal B}(A \cap B)=1$;
\item ${\cal B}(\Omega)=1$.
\end{itemize}
\end{definition}

A belief valuation is also called a categorical belief function or a 0-1 necessity measure in the literature. In practice, we use the characterization that $\mathcal{B}$ is a belief valuation if and only if there is a nonempty set $E_\mathcal{B} \subseteq \Omega$ such that
\begin{equation}
    \mathcal{B}(A) = \left\{ \begin{array}{ll}
		1  & \mbox{if } E_\mathcal{B} \subseteq A \\
		0 & \mbox{otherwise }
		\end{array}
		\right. .
\end{equation}
We can also describe belief valuations in terms of filters, since $\mathcal{B}$ is a belief valuation if and only if
\begin{equation}
\{ A \subseteq \Omega \;:\; \mathcal{B}(A)=1 \} \subseteq 2^\Omega
\end{equation}
is a proper filter of subsets of $\Omega$. This filter can be interpreted as the collection of sets in which an agent has full belief. The next result makes this precise.  For $R$ a coherent betting function, we denote by $\mathcal{B}_R: 2^{\Omega} \to \{0,1\}$ the function that satisfies $\mathcal{B}_R(A)=1$ if and only if $\Bu_R(1_A)=1$.

\begin{theorem}
A function $\mathcal{B}: 2^{\Omega} \to \{0,1\}$ is a belief valuation if and only if there is a coherent betting function $R$ such that $\mathcal{B}=\mathcal{B}_R$.
\end{theorem}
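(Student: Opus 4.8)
The plan is to prove both implications, relying throughout on the characterization of Theorem \ref{thm:coherence1} that for a coherent $R$ the buy function $\Bu_R$ is positively homogeneous and super-additive with $\Bu_R(X)\ge\min X$, together with the bound $\Bu_R(X)\le\max X$ established in the proof of the no-sure-loss theorem. In particular these two bounds force $\Bu_R(c)=c$ for every constant bet $c$, a fact I would use repeatedly. For the ``only if'' direction I would exploit the concrete description of belief valuations via a nonempty set $E_{\mathcal B}$; for the ``if'' direction I would verify the four defining properties of a belief valuation directly for $\mathcal B_R$.

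For the direction that a belief valuation $\mathcal B$ arises as some $\mathcal B_R$, I would take the nonempty set $E=E_{\mathcal B}\subseteq\Omega$ from the characterization and define the betting function
\begin{equation}
R(X)=1 \quad\Longleftrightarrow\quad \min_{\omega\in E}X(\omega)\ge 0.
\end{equation}
A short computation shows that for fixed $X$ the threshold is $\alpha_0=-\min_{\omega\in E}X(\omega)$, so $R$ is a genuine betting function, and that its buy function is exactly $\Bu_R(X)=\min_{\omega\in E}X(\omega)$. Since the minimum over $E$ dominates the minimum over $\Omega$, is positively homogeneous, and is super-additive, Theorem \ref{thm:coherence1} yields coherence of $R$. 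Finally $\mathcal B_R(A)=1$ iff $\min_{\omega\in E}1_A(\omega)=1$ iff $E\subseteq A$, which is precisely $\mathcal B(A)=1$; hence $\mathcal B_R=\mathcal B$.

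For the converse, given coherent $R$ I would check that $\mathcal B_R$ satisfies the four axioms. Monotonicity is immediate: if $A\subseteq B$ then $1_B=1_A+1_{B\setminus A}$, and super-additivity together with $\Bu_R(1_{B\setminus A})\ge 0$ gives $\Bu_R(1_B)\ge 1$, while $\Bu_R(1_B)\le 1$; thus $\Bu_R(1_A)=1$ forces $\Bu_R(1_B)=1$. The normalization $\mathcal B_R(\Omega)=1$ is just $\Bu_R(1)=1$. For the complement axiom, from $1_A+1_{A^c}=1$ and super-additivity we obtain $1=\Bu_R(1)\ge\Bu_R(1_A)+\Bu_R(1_{A^c})$, so $\Bu_R(1_A)=1$ implies $\Bu_R(1_{A^c})\le 0<1$, that is, $\mathcal B_R(A^c)=0$.

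The delicate axiom, and the main obstacle, is closure under intersection, because super-additivity of $\Bu_R$ produces only lower bounds and one must combine it with the upper bound $\Bu_R(\,\cdot\,)\le\max(\,\cdot\,)$ to pin a value exactly at $1$. The trick I would use is the pointwise inequality $1_{A\cap B}\ge 1_A+1_B-1$, which holds on all of $\Omega$: writing $1_{A\cap B}=(1_A+1_B-1)+g$ with $g\ge 0$ and applying super-additivity together with $\Bu_R(-1)=-1$ gives $\Bu_R(1_{A\cap B})\ge\Bu_R(1_A)+\Bu_R(1_B)-1=1$. Since also $\Bu_R(1_{A\cap B})\le\max 1_{A\cap B}\le 1$, we conclude $\Bu_R(1_{A\cap B})=1$, so $\mathcal B_R(A\cap B)=1$. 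The same inequality incidentally rules out $A\cap B=\emptyset$, since then the upper bound would be $0$, contradicting the lower bound $1$. Assembling the four verified properties completes the converse.
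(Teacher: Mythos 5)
Your proof is correct and follows essentially the same route as the paper: the same construction $\Bu_R(X)=\min_{\omega\in E_{\mathcal B}}X(\omega)$ for the existence direction, and the same super-additivity arguments for the converse, where your pointwise bound $1_{A\cap B}\ge 1_A+1_B-1$ is only a cosmetic variant of the paper's exact identity $1_{A\cap B}=1_A+1_B-1_{A\cup B}$ (your slack $g$ is precisely $1_{(A\cup B)^c}$). The only difference is that you spell out details the paper leaves implicit, such as verifying that $R$ is a genuine betting function and invoking the upper bound $\Bu_R(X)\le\max X$ explicitly.
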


\begin{proof}
Suppose first that $\cal{B}$ is a belief valuation. We define $R$ by
$$
\Bu_R(X):= \min_{\omega \in E_\mathcal{B}} X(\omega).
$$
Clearly $R$ is coherent and it follows from the definition of $\mathcal{B}_R$ that $\mathcal{B}_R= \mathcal{B}$.

For the converse, suppose that $R$ is a coherent betting function. We check that $\mathcal{B}_R$ is a belief valuation. Since $\Bu_R(1_{\Omega})=1$ we have $\mathcal{B}_R(\Omega)=1$. The second property in Definition \ref{def:belval} follows immediately from the monotonicity of $\Bu_R$. If $\Bu_R(A)=\Bu_R(B)=1$, then   
\begin{eqnarray*}
\Bu_R(1_{A \cap B}) &=& \Bu_R(1_A + 1_B - 1_{A \cup B})\\
&\geq & \Bu_R(1_A) + \Bu_R(1_B)+ \Bu_R(-1_{A \cup B}) \geq 1,
\end{eqnarray*}
so $\Bu_R(1_{A \cap B})=1$. Finally, if $\mathcal{B}_R(A)=1$, then since
$$
1= \Bu_R(1_A + 1_{A^c}) \geq \Bu_R(1_A) + \Bu_R(1_{A^c}),
$$ 
it follows that $\Bu_R(1_{A^c})=0$.
\end{proof}

 A belief valuation should be compared with the classical notion of a {\em truth valuation}. The definition of a truth valuation $\mathcal{T}:2^{\Omega} \to \{0,1\}$ is similar to the definition of a belief valuation, the only difference being that in the first bullet, `if' is replaced by `if and only if'. Hence a truth valuation is a special belief valuation, namely one that corresponds to a proper \emph{ultra}filter of sets. Truth valuations are precisely those belief valuations $\mathcal{B}$ for which $E_\mathcal{B}$ is a singleton. A major difference between truth and belief is that if an agent does not believe in $A$, this does not imply that (s)he does believe in $A^c$. As a result, the implication in the first bullet in the definition of a belief valuation goes in one direction only. 

Given a belief valuation $\mathcal{B}$ and a set $S \subseteq \Omega$ for which $\mathcal{B}(S)=1$, the agent fully believes that a bet $X \in \mathcal{X}$ has a revenue of at least
\begin{equation}
\min_{\omega \in S} X(\omega).
\end{equation}
Since this holds for all $S$ for which $\mathcal{B}(S)=1$, this leads to the definition of {\em guaranteed revenue}.

\begin{definition}
For any belief valuation $\cal{B}$, the {\em guaranteed revenue} $G_{\cal{B}}: \cal{X} \to \mathbb{R}$ is defined as
$$
G_{\cal{B}}(X)= \max_{A: \mathcal{B}(A) = 1} \min_{\omega \in A} X(\omega).
$$
\end{definition}
Since we have that $\mathcal{B}(A)=1$ if and only if $E_\mathcal{B} \subseteq A$, we can express the guaranteed revenue as
\begin{equation}
G_{\cal{B}}(X) = \min_{ \omega \in E_\mathcal{B}} X(\omega). 
\end{equation}

To benchmark and motivate our main result Theorem \ref{belangrijk} below we now first show how classical probability distributions can be characterized with the notion of guaranteed revenue.

\begin{definition}
A betting function $R$ is \emph{P-consistent} if and only if, for all $X_1,\ldots,X_N$ and $Y_1,\ldots,Y_M$ such that
\begin{equation}
\label{eq:Pcons1}
G_\mathcal{B} \left( \sum_{i=1}^N  X_i \right) \leq G_\mathcal{B} \left( \sum_{j=1}^M Y_j \right)
\end{equation}
for every belief valuation $\mathcal{B}$, we have
\begin{equation}
\label{eq:Pcons2}
\sum_{i=1}^N \Bu_R(X_i) \leq \sum_{j=1}^M \Bu_R(Y_j).
\end{equation}
\end{definition}

\begin{theorem}
\label{hierboven}
$P$ is a probability distribution if and only if there exists a coherent and P-consistent $R$ such that $P(A)=\Bu_R(1_A)$.
\end{theorem}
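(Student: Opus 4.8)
\section*{Proof proposal}

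The plan is to prove both implications by feeding carefully chosen families of bets into the P-consistency condition, exploiting the fact that the guaranteed revenue $G_\mathcal{B}$ of a \emph{singleton} belief valuation is nothing but pointwise evaluation.

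For the forward implication, suppose $P$ is a probability distribution. I would take the coherent betting function $R$ already constructed in the preceding theorem, namely the one with $\Bu_R(X)=\sum_{\omega\in\Omega}P(\{\omega\})X(\omega)$; this $R$ is coherent, linear, and satisfies $\Bu_R(1_A)=P(A)$, so only P-consistency remains to be checked. Assume therefore that $G_\mathcal{B}(\sum_{i=1}^N X_i)\le G_\mathcal{B}(\sum_{j=1}^M Y_j)$ for every belief valuation $\mathcal{B}$. Restricting to those belief valuations with $E_\mathcal{B}=\{\omega\}$ a singleton, where $G_\mathcal{B}(X)=X(\omega)$, this hypothesis says precisely that $\sum_i X_i\le\sum_j Y_j$ pointwise on $\Omega$. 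Integrating this pointwise inequality against the nonnegative weights $P(\{\omega\})$ and using linearity of $\Bu_R$ then gives $\sum_i\Bu_R(X_i)=\Bu_R(\sum_i X_i)\le\Bu_R(\sum_j Y_j)=\sum_j\Bu_R(Y_j)$, which is the required conclusion (\ref{eq:Pcons2}).

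For the converse, suppose $R$ is coherent and P-consistent with $P(A)=\Bu_R(1_A)$. Coherence immediately yields $P(\Omega)=\Bu_R(1)=1$, $P(\emptyset)=\Bu_R(0)=0$, and $0\le P(A)\le 1$ (using $\min 1_A\le\Bu_R(1_A)\le\max 1_A$, where the lower bound is part of the coherence characterization and the upper bound was established in the no-sure-loss proof). The heart of the matter is additivity (P2), and here I would apply P-consistency to the pointwise identity $1_A+1_B=1_{A\cup B}+1_{A\cap B}$. Since these two functions are equal as elements of $\mathcal{X}$, we have $G_\mathcal{B}(1_A+1_B)=G_\mathcal{B}(1_{A\cup B}+1_{A\cap B})$ for \emph{every} $\mathcal{B}$, so the hypothesis of P-consistency holds with the inequality valid in both directions. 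Applying the condition twice, swapping the roles of the $X$'s and the $Y$'s, gives $\Bu_R(1_A)+\Bu_R(1_B)=\Bu_R(1_{A\cup B})+\Bu_R(1_{A\cap B})$, that is, $P(A)+P(B)=P(A\cup B)+P(A\cap B)$, which is exactly (P2).

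I expect the only real subtlety to be organizational rather than technical: choosing the right test bets for P-consistency in each direction. The guiding principle is that P-consistency transfers relations that hold at the level of $G_\mathcal{B}$ (in the singleton case, pointwise relations) into numerical relations among buy prices, so one direction is driven by the singleton valuations, which recover full pointwise comparison, and the other by the single algebraic identity $1_A+1_B=1_{A\cup B}+1_{A\cap B}$. A minor point worth checking carefully is that the linearity of $\Bu_R$ used in the forward direction is legitimate; it holds because $\Bu_R=\Se_R$ for the constructed $R$, which makes $\Bu_R$ both super- and sub-additive, hence additive.
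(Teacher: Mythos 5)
Your proposal is correct and follows essentially the same route as the paper's proof: the same expectation construction $\Bu_R(X)=\sum_{\omega}P(\{\omega\})X(\omega)$ combined with singleton belief valuations (for which $G_\mathcal{B}$ is pointwise evaluation) in one direction, and P-consistency applied in both directions to a pointwise identity of indicator bets in the other. The only inessential difference is that you use the modular identity $1_A+1_B=1_{A\cup B}+1_{A\cap B}$ for arbitrary $A,B$, whereas the paper specializes to disjoint $A,B$ and the identity $1_A+1_B=1_{A\cup B}$; both yield (P2) directly.
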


In words, this result says that if the guaranteed revenue of one collection of bets is larger than the guaranteed revenue of a second collection of bets, then the agent should be willing to pay more for the second collection.

The proof of the theorem below reveals that the statement of the theorem is, strictly speaking, overly complicated. Indeed, if we would leave out $G_{\mathcal{B}}$ everywhere, the ensuing result would still be true, and probably easier to interpret: the theorem would say that if one collection of bets always pays out more than a second collection, an agent would be willing to pay more for the first collection. We have chosen for the current formulation since this formulation points the way for the necessary changes.

\begin{proof} {\em (of Theorem \ref{hierboven}.)}
First suppose that $R$ is coherent, P-consistent and that $P(A)=\Bu_R(1_A)$. Suppose $A \cap B = \emptyset$. Then 
\begin{equation}
G_\mathcal{B}(1_A + 1_B) = G_\mathcal{B}(1_{A \cup B})
\end{equation}
for every $\mathcal{B}$, so by P-consistency we have
\begin{equation}
\Bu_R(1_A + 1_B) = \Bu_R(1_A) + \Bu_R(1_B).
\end{equation}
Hence $P(A \cup B) = P(A) + P(B)$. Since $R$ is coherent, we have $P(\Omega)=\Bu_R(1_\Omega)=1$ and it follows that $P$ is a probability distribution.

Now suppose that $P$ is a probability distribution. We define $R$ by
\begin{equation}
\Bu_R(X) := \sum_{\omega \in \Omega} X(\omega) P(\{\omega\}).
\end{equation}
Clearly $\Bu_R(1_A)=P(A)$, and it follows from Theorem \ref{thm:coherence1} that $R$ is coherent. Now suppose that
\begin{equation}
\label{eq:everyB}
G_\mathcal{B} \left( \sum_{i=1}^N  X_i \right) \leq G_\mathcal{B} \left( \sum_{j=1}^M Y_j \right)
\end{equation}
for every belief valuation $\mathcal{B}$. For every $\omega \in \Omega$, the map $\mathcal{B}_\omega(A) := 1_A(\omega)$ is a belief valuation. Note that $G_{B_\omega}(X) = X(\omega)$, so (\ref{eq:everyB})  implies that
\begin{equation}
\sum_{i=1}^N  X_i \leq  \sum_{j=1}^M Y_j.
\end{equation}
Hence, by our definition of $\Bu_R$:
\begin{equation}
\begin{aligned}
\sum_{i=1}^N  \Bu_R(X_i) & = \Bu_R \left( \sum_{i=1}^N X_i \right) \\
& \leq \Bu_R \left( \sum_{j=1}^M Y_j \right) \\
& = \sum_{j=1}^M \Bu_R(Y_j).
\end{aligned}
\end{equation}
So $R$ is P-consistent.
\end{proof}

Although we do not deny that the notion of P-consistency is in some sense a reasonable requirement for collections of bets, there is, from the point of view of epistemic probability, a problem with it. Whereas in (\ref{eq:Pcons1}) the guaranteed revenues of the \emph{combined} bets are compared, in (\ref{eq:Pcons2}) the sums of the buy prices of the \emph{individual} bets are compared. This observation goes back to the heart of the problem with the use of classical probability distributions for epistemic purposes: the guaranteed revenue of the combined bets $1_A$ and $1_A^c$ is - of course - the same as the guaranteed revenue of the bet $1$, under any belief valuation. But this should not have any direct implication for the maximum price for which an agent would be willing to buy $1_A$ or $1_A^c$ individually. 

This suggests that for epistemic purposes, we should change P-consistency in one of the following two ways: in (\ref{eq:Pcons1}) the sums of the guaranteed revenues of the individual bets should be compared, or in (\ref{eq:Pcons2}) the buy prices of the combined bets should be compared. The first option leads to our definition of B-consistency. 

\begin{definition}
A betting function $R$ is \emph{B-consistent} if for all $X_1,\ldots,X_N$ and $Y_1,\ldots,Y_M$ such that
\begin{equation}
\sum_{i=1}^N G_\mathcal{B}(X_i) \leq \sum_{j=1}^M G_\mathcal{B}(Y_j)
\end{equation}
for every belief valuation $\mathcal{B}$, we have
\begin{equation}
\sum_{i=1}^N \Bu_R(X_i) \leq \sum_{j=1}^M \Bu_R(Y_j).
\end{equation}
\end{definition}

There is a simple reason for this choice. Indeed, the alternative would result in the constraint that if $G_\mathcal{B}(\sum_i X_i) \leq G_\mathcal{B}(\sum_j Y_j)$ for every belief valuation $\mathcal{B}$, then $\Bu_R(\sum X_i) \leq \Bu_R(\sum_j Y_j)$. This constraint, however, is simply B-consistency for $N=M=1$. 

Hence, the notion of B-consistency differs from P-consistency in the sense that we compare the correct quantities: we compare sums of guaranteed revenues of individual (collective) bets to sums of buy prices of individual (collective) bets. 

The next example illustrates that not every coherent betting function is B-consistent.

\begin{example}
\label{ex:noBcons}
Suppose $\Omega = \{1,2,3,4\}$ and that $R$ is given by
\begin{equation}
\Bu_R(X) = \min \left\{ \frac{1}{2} \sum_{i=1}^2 X(i), \frac{1}{4} \sum_{i=1}^4 X(i) \right\}.
\end{equation}
It is easy to check that $R$ is coherent, and that for all belief valuations $\cal{B}$ we have
\begin{equation}
    G_\mathcal{B}(1_{\{2,3,4\}}) + G_\mathcal{B}(1_{\{2\}}) \geq G_\mathcal{B}(1_{\{2,3\}}) + G_\mathcal{B}(1_{\{2,4\}}),
\end{equation}
but
\begin{equation}
\Bu_R(1_{\{2,3,4\}}) + \Bu_R(1_{\{2\}}) = \frac{3}{4} < 1 = \Bu_R(1_{\{2,3\}}) + \Bu_R(1_{\{2,4\}}).
\end{equation}
\end{example}

With the notion of B-consistency we can now state and prove our main result. The following theorem constitutes our behavioral interpretation of belief functions. It shows that only B-consistency is needed to guarantee that a lower prevision in the sense of Walley \cite{walley} is in fact a belief function and that every belief function can be obtained this way. The result legitimizes the use of belief functions for modelling epistemic probability. 

\begin{theorem}
\label{belangrijk}
$\B$ is a belief function if and only if there exists a coherent and B-consistent $R$ such that $\B(A) = \Bu_R(1_A)$.
\end{theorem}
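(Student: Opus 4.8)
The plan is to prove both implications by exploiting the identity $G_{\mathcal{B}}(1_A)=\mathcal{B}(A)$, which holds because $G_{\mathcal{B}}(X)=\min_{\omega\in E_{\mathcal{B}}}X(\omega)$ and $\min_{\omega\in E_{\mathcal{B}}}1_A(\omega)=1$ exactly when $E_{\mathcal{B}}\subseteq A$. This turns every belief valuation into a $\{0,1\}$-valued belief function: writing $J=\{i:E_{\mathcal{B}}\subseteq A_i\}$, one checks directly that $\sum_{\emptyset\neq I}(-1)^{|I|+1}\mathcal{B}(\bigcap_{i\in I}A_i)$ equals $1$ when $J\neq\emptyset$ and $0$ otherwise, while $\mathcal{B}(\bigcup_i A_i)=1$ as soon as $J\neq\emptyset$; hence every belief valuation satisfies the Shafer inequality (B2). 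This auxiliary fact is the bridge between the guaranteed-revenue side of B-consistency and the defining inequality of belief functions.

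For the direction where $R$ is coherent and B-consistent and $\B(A)=\Bu_R(1_A)$, I would first obtain (B1) from coherence: $\Bu_R(1_{\Omega})=1$ and $\Bu_R(1_{\emptyset})=\Bu_R(0)=0$, using the bounds $\min X\le\Bu_R(X)\le\max X$ established earlier. For (B2) I would use the pointwise inclusion--exclusion identity $1_{\bigcup_i A_i}=\sum_{\emptyset\neq I}(-1)^{|I|+1}1_{\bigcap_{i\in I}A_i}$ and split its right-hand side into the odd-cardinality ($+1$) and even-cardinality ($-1$) parts. Feeding the odd-cardinality indicators as the $X_i$, and the bet $1_{\bigcup_i A_i}$ together with the even-cardinality indicators as the $Y_j$, into the definition of B-consistency, the hypothesis $\sum_i G_{\mathcal{B}}(X_i)\le\sum_j G_{\mathcal{B}}(Y_j)$ becomes (via $G_{\mathcal{B}}(1_A)=\mathcal{B}(A)$) precisely the statement that every belief valuation satisfies (B2), which we just verified. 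Its conclusion is exactly the belief inequality for $\B$.

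For the converse I would build $R$ from the basic belief assignment $m$ furnished by Theorem \ref{def:belief}, setting $\Bu_R(X):=\sum_{C\subseteq\Omega}m(C)\min_{\omega\in C}X(\omega)$ and recovering $R$ through $R(X)=1$ iff $\Bu_R(X)\ge0$; this is a genuine betting function because $\Bu_R(X+\alpha)=\Bu_R(X)+\alpha$. Then $\Bu_R(1_A)=\sum_{C\subseteq A}m(C)=\B(A)$, and the three conditions of Theorem \ref{thm:coherence1} follow from $\sum_C m(C)=1$ with $m\ge0$, the positive homogeneity of $\min$, and the superadditivity of $\min$, so $R$ is coherent. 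The crucial point is B-consistency: since $m(\emptyset)=0$, every $C$ with $m(C)>0$ is nonempty and hence equals $E_{\mathcal{B}}$ for some belief valuation $\mathcal{B}$, so that $\min_{\omega\in C}X(\omega)=G_{\mathcal{B}}(X)$. Thus $\Bu_R$ is a nonnegative mixture of guaranteed revenues, and the hypothesis $\sum_i G_{\mathcal{B}}(X_i)\le\sum_j G_{\mathcal{B}}(Y_j)$ for all $\mathcal{B}$—applied to each $\mathcal{B}$ with $E_{\mathcal{B}}=C$—gives the inequality inside each bracket; multiplying by $m(C)\ge0$ and summing over $C$ yields $\sum_i\Bu_R(X_i)\le\sum_j\Bu_R(Y_j)$ term by term.

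The main obstacle is organizational rather than deep. In the forward direction one must choose the correct grouping of the inclusion--exclusion terms so that a single instance of B-consistency reproduces exactly the Shafer inequality, which means getting the signs right and placing $1_{\bigcup_i A_i}$ on the appropriate side; this step also needs the auxiliary fact that belief valuations themselves satisfy (B2). Once the identity $G_{\mathcal{B}}(1_A)=\mathcal{B}(A)$ and the mixture representation of $\Bu_R$ as $\sum_C m(C)G_{\mathcal{B}}$ (with $E_{\mathcal{B}}=C$) are in place, both the coherence and the B-consistency verifications collapse to routine termwise manipulations.
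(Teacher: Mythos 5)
Your proof is correct, and both of its essential mechanisms coincide with the paper's: (i) in the forward direction you transfer the Shafer inequality, which every belief valuation satisfies, through a single instance of B-consistency using the identity $G_{\mathcal{B}}(1_A)=\mathcal{B}(A)$; (ii) in the converse you realize $\Bu_R$ as the $m$-weighted mixture of guaranteed revenues $\min_{\omega\in C}X(\omega)=G_{\mathcal{B}}(X)$ (with $E_{\mathcal{B}}=C$) and verify B-consistency termwise. The organizational difference is genuine, though: the paper does not prove Theorem \ref{belangrijk} directly but derives it as an immediate corollary of the stronger Theorem \ref{thm:karak}, which characterizes B-consistent coherent $R$ by the Choquet-integral representation of $\Bu_R$ on \emph{all} bets; the hard half of that proof uses the layer-cake decomposition $\sum_{k}(y_k-y_{k-1})1_{\{X\geq y_k\}}$ of a general bet $X$ together with B-consistency and positive homogeneity to extend the representation from indicators to arbitrary $X$. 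By aiming only at the stated theorem you never need this step: you work exclusively with indicator bets, so the layer-cake argument disappears, and your proof is leaner for this statement. You also make explicit two points the paper asserts or leaves implicit, namely that belief valuations themselves satisfy (B2) (your computation with $J=\{i:E_{\mathcal{B}}\subseteq A_i\}$ and the vanishing alternating sum), and that the constructed $R$ is a genuine coherent betting function (via $\Bu_R(X+\alpha)=\Bu_R(X)+\alpha$ and Theorem \ref{thm:coherence1}). What the paper's detour buys in exchange is the stronger standalone result: B-consistency pins down the entire buy function as the lower expectation with respect to $\B$, not merely its values on events, which your argument does not yield.
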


This result follows immediately from the following theorem which is interesting in its own right and characterizes B-consistency for coherent betting functions.

\begin{theorem}
\label{thm:karak}
Let $R$ be a coherent betting function. Then $R$ is B-consistent if and only if there is a basic belief assignment $m$ such that
\begin{equation}
\label{eq:choquet}
\Bu_R(X)=\sum_{S \subseteq \Omega} m(S) \min_{\omega \in S} X(\omega)
\end{equation}
for all $X \in \cal{X}$.
\end{theorem}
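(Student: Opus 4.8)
The plan is to prove the two implications separately, with the forward direction (coherence together with B-consistency implying the Choquet representation) carrying essentially all of the work. Throughout I use that $G_\mathcal{B}(X)=\min_{\omega\in E_\mathcal{B}}X(\omega)$ and that every nonempty $S\subseteq\Omega$ arises as $E_\mathcal{B}$ for some belief valuation $\mathcal{B}$, so that ``for all $\mathcal{B}$'' is the same as ``for all nonempty $S$''. For the easy converse, suppose $\Bu_R(X)=\sum_{S\subseteq\Omega}m(S)\min_{\omega\in S}X(\omega)$ for a basic belief assignment $m$. If $\sum_i G_\mathcal{B}(X_i)\le\sum_j G_\mathcal{B}(Y_j)$ for all $\mathcal{B}$, then $\sum_i\min_{\omega\in S}X_i(\omega)\le\sum_j\min_{\omega\in S}Y_j(\omega)$ for every nonempty $S$; multiplying by $m(S)\ge0$, summing over $S$ (with $m(\emptyset)=0$), and using linearity yields $\sum_i\Bu_R(X_i)\le\sum_j\Bu_R(Y_j)$. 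So $R$ is B-consistent, the only property of $m$ used being its nonnegativity.

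For the forward direction I would set $\B(A):=\Bu_R(1_A)$ and define $m$ as its M\"obius transform, $m(S)=\sum_{A\subseteq S}(-1)^{|S\setminus A|}\B(A)$, so that $\B(A)=\sum_{S\subseteq A}m(S)$ holds by construction. Immediately $m(\emptyset)=\B(\emptyset)=\Bu_R(0)=0$ and $\sum_{S}m(S)=\B(\Omega)=\Bu_R(1_\Omega)=1$ by coherence. Two tasks then remain: extending the identity from indicators to arbitrary bets, and establishing $m(S)\ge0$.

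For the extension I would use a layer-cake decomposition. Writing the distinct values of $X\ge0$ as $0\le v_1<\cdots<v_k$ with $v_0:=0$ and $A_l:=\{X\ge v_l\}$, one checks the pointwise identity $\min_{\omega\in S}X(\omega)=\sum_{l=1}^k(v_l-v_{l-1})\min_{\omega\in S}1_{A_l}(\omega)$ for every nonempty $S$, i.e.\ $G_\mathcal{B}(X)=\sum_l G_\mathcal{B}\big((v_l-v_{l-1})1_{A_l}\big)$ for all $\mathcal{B}$. Applying B-consistency in both directions converts this equality of guaranteed revenues into $\Bu_R(X)=\sum_l(v_l-v_{l-1})\B(A_l)$, and inserting $\B(A_l)=\sum_{S\subseteq A_l}m(S)$ shows this equals $\sum_S m(S)\min_{\omega\in S}X(\omega)$. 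Arbitrary $X$ then follows because both $\Bu_R$ and the right-hand functional are translation-equivariant (a consequence of coherence for the former, and of $\sum_S m(S)=1$ for the latter), so it suffices to treat $X\ge0$.

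The main obstacle is the nonnegativity of $m$. Splitting the M\"obius sum by parity, $m(S)\ge0$ is exactly the statement $\sum_{T\subseteq S,\,|S\setminus T|\text{ odd}}\B(T)\le\sum_{T\subseteq S,\,|S\setminus T|\text{ even}}\B(T)$, and by B-consistency this follows once I verify the corresponding inequality of guaranteed revenues for every $\mathcal{B}$. Since $G_\mathcal{B}(1_T)=1_{\{E_\mathcal{B}\subseteq T\}}$, this reduces to counting the sets $T$ with $E_\mathcal{B}\subseteq T\subseteq S$ according to the parity of $|S\setminus T|$: writing $T=E_\mathcal{B}\cup W$ with $W\subseteq S\setminus E_\mathcal{B}$, the two parities occur equally often (the even and odd counts are both $2^{|S\setminus E_\mathcal{B}|-1}$) unless $S\setminus E_\mathcal{B}=\emptyset$, in which case the even side exceeds the odd side by exactly one; and if $E_\mathcal{B}\not\subseteq S$ both sides vanish. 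Getting this bookkeeping right is the crux. Equivalently, the same family of inequalities (ranging over all finite collections $A_1,\ldots,A_N$) shows via B-consistency that $\B$ is totally monotone, i.e.\ satisfies (B2), after which Theorem \ref{def:belief} delivers a basic belief assignment and hence $m\ge0$; combined with the extension step this completes the representation.
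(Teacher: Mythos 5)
Your proof is correct and follows essentially the same route as the paper: the easy direction is identical, and for the hard direction both arguments transfer inequalities on guaranteed revenues of indicator bets to $\Bu_R$ via B-consistency and then extend to arbitrary bets through the layer-cake (Choquet) decomposition $\min_{\omega \in S} X(\omega) = \sum_k (y_k - y_{k-1}) 1(S \subseteq \{X \geq y_k\})$. The only organizational difference is that you establish nonnegativity of the M\"obius transform directly by the parity count over sets $T$ with $E_\mathcal{B} \subseteq T \subseteq S$, whereas the paper cites that belief valuations satisfy (B2) and then invokes Theorem \ref{def:belief} to obtain $m \geq 0$; as you yourself note, these are equivalent formulations of the same step.
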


The expression in (\ref{eq:choquet}) is known as the Choquet integral of $X$ with respect to the belief function that corresponds to $m$ (see for instance \cite{grabisch}), and is also called the \emph{lower expectation} of $X$ (see for instance \cite{shafer08}). Hence, another way of phrasing Theorem \ref{thm:karak} is that $R$ is B-consistent if and only if $\B(A) := \Bu_R(1_A)$ is a belief function and 
$\Bu_R(X)$ is given by the Choquet integral of $X$ with respect to $\B$. We now give the proof.

\begin{proof}[Proof of Theorem \ref{thm:karak}]
First suppose that there is a basic belief assignment $m$ such that
$$
\Bu_R(X)=\sum_{S \subseteq \Omega} m(S) \min_{\omega \in S} X(\omega)
$$
for all $X \in \cal{X}$. Also suppose that for $X_1,\ldots,X_N,Y_1,\ldots,Y_M \in \mathcal{X}$ we have
\begin{equation}
    \sum_{i=1}^N \min_{\omega \in S} X_i(\omega) \leq \sum_{j=1}^M \min_{\omega \in S} Y_j(\omega)
\end{equation}
for every nonempty $S \subseteq \Omega$. Then
\begin{equation}
\begin{aligned}
\sum_{i=1}^N \Bu_R(X_i) & = \sum_{i=1}^N  \sum_{S \subseteq \Omega} m(S) \min_{\omega \in S} X_i(\omega) \\
& =  \sum_{S \subseteq \Omega} m(S)  \sum_{i=1}^N \min_{\omega \in S} X_i(\omega) \\
& \leq  \sum_{S \subseteq \Omega} m(S)  \sum_{j=1}^M \min_{\omega \in S} Y_j(\omega) \\
& =  \sum_{j=1}^M \Bu_R(Y_j).
\end{aligned}
\end{equation}
Hence $R$ is B-consistent.

For the converse of the theorem, suppose that $R$ is B-consistent. First we show that $\B(A):=\Bu_R(1_A)$ is a belief function. Clearly we have
\begin{equation}
G_\mathcal{B}(1_A) = \mathcal{B}(A),
\end{equation}
and since $\mathcal{B}$ is a belief function, we have
\begin{equation}
\mathcal{B}\left(\bigcup_{i=1}^N A_i \right) + \sum_{\substack{I \subseteq \{1,\ldots,n\} \\ I \not= \emptyset, |I| \mathrm{\;even}}} \mathcal{B} \left( \bigcap_{i \in I} A_i \right) \geq \sum_{\substack{I \subseteq \{1,\ldots,n\} \\ I \not= \emptyset, |I| \mathrm{\;odd}}} \mathcal{B} \left( \bigcap_{i \in I} A_i \right).
\end{equation}

So by B-consistency, we have
\begin{equation}
\B\left(\bigcup_{i=1}^N A_i \right) + \sum_{\substack{I \subseteq \{1,\ldots,n\} \\ I \not= \emptyset, |I| \mathrm{\;even}}} \B \left( \bigcap_{i \in I} A_i \right) \geq \sum_{\substack{I \subseteq \{1,\ldots,n\} \\ I \not= \emptyset, |I| \mathrm{\;odd}}} \B \left( \bigcap_{i \in I} A_i \right), 
\end{equation}
and it follows that $\B$ is a belief function.

Since $\B$ is a belief function, there is a $m: 2^\Omega \rightarrow [0,1]$ with $m(\emptyset)=0$ such that
\begin{equation}
\B(A) = \sum_{S \subseteq A} m(S).
\end{equation}
Now let $X \in \mathcal{X}$. We write $X(\Omega) = \{y_1,\ldots,y_K\}$ where $y_1 < y_k < \ldots < y_K$. Set $y_0:=0$. It is well known that the Choquet integral of $X$ can be expressed as
\begin{equation}
\label{eq:een}
\sum_{S \subseteq \Omega} m(S) \min_{\omega \in S} X(\omega) =  \sum_{j=1}^K  (y_j - y_{j-1}) \B( \{X \geq y_j\}).
\end{equation}
We have
\begin{equation}
\begin{aligned}
\sum_{k=1}^K \min_{\omega \in S} (y_k - y_{k-1}) 1_{\{X \geq y_k\}}(\omega)  & = \sum_{k=1}^K (y_k - y_{k-1}) 1( S \subseteq \{X \geq y_k \}) \\
& = \min_{\omega \in S} X(\omega).
\end{aligned}
\end{equation}
So by B-consistency we find that
\begin{equation}
\label{eq:twee}
\sum_{k=1}^K  \Bu_R((y_k - y_{k-1}) 1_{\{X \geq y_k\}})) = \Bu_R(X).
\end{equation}
Now it follows with (\ref{eq:een}), (\ref{eq:twee}) and coherence of $R$ that
\begin{equation}
\Bu_R(X) = \sum_{S \subseteq \Omega} m(S) \min_{\omega \in S} X(\omega).
\end{equation}
\end{proof}

\section{Discussion}
We have first argued that the classical axioms of Kolmogorov are not suitable for modeling epistemic probability. This has been known for a long time, and researchers like Glenn Shafer and Peter Walley have developed a general theory of belief functions, respectively lower provisions, as an alternative for classical probability theory in an epistemological context. 

The theory of belief functions was never picked up in mainstream probability. One of the reasons for this was the lack of a clear behavioral interpretation of belief functions, analogous to the betting interpretation of probabilities. In this paper we have developed such a behavioral interpretation. We have embedded belief functions in a betting context, and we have shown that belief functions arise precisely when we add B-consistency to the coherent lower previsions in the theory of Walley. In this way, not only do we provide a behavioral interpretation of belief functions, but we also make a natural connection between the theory of Walley and the theory of Shafer.

Of course adding B-consistency calls for an argument why a rational agent should adhere to it. We think it is not controversial to say that no guaranteed losses (Theorem \ref{thm:coherence2}) is the bottom line for any reasonable constraint. Beyond that things are, of course, debatable. Our argument to restrict the lower previsions of Walley to B-consistency is derived from the way we obtained it, namely by altering P-consistency in a very reasonable way. If a collection of bets is in some sense guaranteed to be better than another collection of bets, then the total price should be higher. The point is now how ``better'' should be formulated. When compared to P-consistency, B-consistency allows for all the flexibility of epistemic probability that we asked for in the introduction. Indeed, note that it is possible to set $m(\Omega)=1$, which corresponds to total ignorance apart from the fact that the outcome is in $\Omega$. We think it is rational to compare collection of bets from the viewpoint of total guaranteed revenue, and to be willing to pay more when this quantity increases. As such, we think that B-consistency is a very reasonable constraint for a rational agent to adhere to. 

But there is more than philosophy, of course. We also want a theory that is practical and relatively easy to apply. Belief functions are close to classical probabilities since they are determined by basic belief assignments. As such, we think they are much more practical and easier to use than coherent lower previsions. Indeed, in practical situations, one does not directly use constraints as coherence or B-consistency to construct a buy function. In case of belief functions, one typically proceeds by constructing a suitable basic belief assignment, see for instance our paper \cite{kerkmees} in which we apply belief functions in the classical forensic context of the so called island problem, precisely by setting up appropriate basic belief assignments. This is very much akin the classical situation: not many people use characterizations like P-consistency (or related characterizations) to set up a probability measure, but it is reassuring that such characterizations exist. Hence we should view B-consistency as a theoretical underpinning for why to use belief functions, but not as a tool that is used in practice to construct belief functions. 

Last but not least, we mention conditional belief functions, a notion which we have not introduced in the current paper. Shafer originally based his notion of conditional belief on the so called Dempster rule of combination, a rule that has been criticized fiercely, see e.g.\ \cite{pearl, FH}. In a forthcoming paper we will discuss conditional beliefs from the perspective of the current betting interpretation. It turns out that there are various ways to do this, depending on the precise epistemological situation, and this fact adds to the suitableness of belief functions to model epistemic uncertainty. Another natural line of research is to further develop the theory of belief functions in infinite spaces, an enterprise that already has obtained some attention in e.g.\ \cite{smets2}.

\medskip\noindent
{\bf Acknowledgement:} We thank an anonymous referee for valuable comments, most notably for his/her suggestion to relate our results to the theory of Peter Walley.

\end{document}